\definecolor{indigo}{rgb}{0.29, 0.0, 0.51}
\definecolor{p1}{gray}{0.4}
\definecolor{p2}{gray}{0.6}
\definecolor{p3}{gray}{0.98}
\definecolor{p4}{gray}{0.8}
\definecolor{p5}{gray}{0.9}
\def\eps{\varepsilon}
\def\B{{B}}
\def\N{{\mathbb N}}
\def\S{{\mathbb S}}
\newcommand{\degh}{\deg_{H}}
\newtheorem{theorem}{Theorem}
\newtheorem{lemma}[theorem]{Lemma}
\newtheorem{corollary}[theorem]{Corollary}
\newtheorem{definition}[theorem]{Definition}
\newcommand{\dif}{\,\mathrm{d}}
\newcommand{\dx}{\dif x}
\newcommand{\dy}{\dif y}
\newcommand{\dz}{\dif z}
\newcommand{\R}{\mathbb{R}}
\newcommand{\Z}{\mathbb{Z}}
\newcommand{\C}{\mathbb{C}}
\newcommand{\brac}[1]{\left (#1 \right )}
\newcommand{\abs}[1]{\left |#1 \right |}
\newcommand{\la}{\mathopen{}\mathclose\bgroup\left\langle}
\newcommand{\ra}{\aftergroup\egroup\right\rangle}
\newcommand{\barint}{
\rule[.036in]{.12in}{.009in}\kern-.16in \displaystyle\int }
\newcommand{\barcal}{\mbox{$ \rule[.036in]{.11in}{.007in}\kern-.128in\int $}}
\def\mvint_#1{\mathchoice
          {\mathop{\vrule width 6pt height 3 pt depth -2.5pt
                  \kern -8pt \intop}\nolimits_{\kern -3pt #1}}%
          {\mathop{\vrule width 5pt height 3 pt depth -2.6pt
                  \kern -6pt \intop}\nolimits_{#1}}%
          {\mathop{\vrule width 5pt height 3 pt depth -2.6pt
                  \kern -6pt \intop}\nolimits_{#1}}%
          {\mathop{\vrule width 5pt height 3 pt depth -2.6pt
                  \kern -6pt \intop}\nolimits_{#1}}}
\numberwithin{theorem}{section} \numberwithin{equation}{section}
\newcommand{\aleq}{\precsim}
\def\XXint#1#2#3{{\setbox0=\hbox{$#1{#2#3}{\int}$}
     \vcenter{\hbox{$#2#3$}}\kern-.5\wd0}}
\let\latexchi\chi
\renewcommand\chi{\@ifnextchar_\sub@chi\latexchi}
\newcommand{\sub@chi}[2]{
  \@ifnextchar^{\subsup@chi{#2}}{\latexchi^{}_{#2}}%
}
\newcommand{\subsup@chi}[3]{
  \latexchi_{#1}^{#3}%
}
\title[Fractional harmonic maps from $\S^3$ to $\S^2$]{Existence of infinitely many homotopy classes\\ from $\S^3$ to $\S^2$ having a minimimzing $W^{s,\frac 3s}$-harmonic map}
\author{Adam Grzela}
\address[Adam Grzela]{Institute of Mathematics, %
University of Warsaw,
Banacha 2,
02-097 Warszawa, Poland}
\email{a.grzela@uw.edu.pl}
\author{Katarzyna Mazowiecka}
\address[Katarzyna Mazowiecka]{
Institute of Mathematics, %
University of Warsaw,
Banacha 2,
02-097 Warszawa, Poland}
\email{k.mazowiecka@mimuw.edu.pl}
\begin{document}
\subjclass[2010]{58E20, 35B65, 35J60, 35S05}
\begin{abstract}
  In 1998 T. Rivi\`{e}re proved that there exist infinitely many homotopy classes of $\pi_3(\S^2)$ having a minimizing 3-harmonic map. This result is especially surprising taking into account that in $\pi_3(\S^3)$ there are only three homotopy classes (corresponding to the degrees $\{-1,0,1\}$) in which a minimizer exists.

We extend this theorem in the framework of fractional harmonic maps and prove that for $s\in(0,1)$ there exist infinitely many homotopy classes of $\pi_{3}(\S^{2})$ in which there is a minimizing $W^{s,\frac{3}{s}}$-harmonic map.
 \end{abstract}
\maketitle
\sloppy

\maketitle
\tableofcontents
\sloppy

\section{Introduction}\label{s:introduction}
The homotopy groups \(\pi_{4m-1}(\mathbb{S}^{2m})\) for \(m \geq 1\) offer a rich source of computable invariants in homotopy theory. A classical construction due to Whitehead~\cite{Whitehead_1947} associates to each smooth map \(f \colon \mathbb{S}^{4m-1} \to \mathbb{S}^{2m}\) a numerical invariant known as the \emph{Hopf degree} (or \emph{Hopf invariant}). To define it, consider the standard volume form \(\omega_{\mathbb{S}^{2m}}\) on \(\mathbb{S}^{2m}\). The pullback \(f^* \omega_{\mathbb{S}^{2m}}\) is a closed \(2m\)-form on \(\mathbb{S}^{4m-1}\) and due to the Poincaré lemma this form is exact. Thus, there exists a \((2m-1)\)-form \(\eta \in \Omega^{2m-1}(\mathbb{S}^{4m-1})\) such that \(d\eta = f^* \omega_{\mathbb{S}^{2m}}\). The Hopf degree of \(f\) is then given by
\[
\degh(f) = \frac{1}{\left| \mathbb{S}^{2m} \right|^{2}} \int_{\mathbb{S}^{4m-1}} \eta \wedge d\eta.
\]
This definition is independent of the choice of \(\eta\), and the resulting quantity \(\degh\) is invariant under homotopy. For further properties of the Hopf invariant, we refer to~\cite{BottTu}.

In the case $m=1$ we have $\pi_{3}(\S^{2})\simeq \Z$ and $\degh(f)$ coincides with the homotopy class of $f$.

The Hopf invariant admits an estimate in terms of the critical Sobolev semi-norm; see~\cite[Introduction]{Riviere98}:
\begin{equation}\label{eq:hopf-estimate}
\left| \degh(f) \right| \lesssim \left( \int_{\mathbb{S}^{4m-1}} |\nabla f|^{4m-1} \right)^{\frac{4m}{4m-1}}.
\end{equation}
It is important to note that the exponent \(\frac{4m}{4m-1}>1\), and one might naturally ask whether this exponent could be improved. However, in the case \(m = 1\), Rivi\`{e}re showed in~\cite{Riviere98} that the exponent \(\frac{4}{3}\) in~\eqref{eq:hopf-estimate} is optimal. More precisely, he proved that
\begin{equation}\label{eq:optimalexponent}
\frac{\log \#_3 |d|}{\log d} \to \frac{3}{4} \quad \text{as} \quad |k| \to \infty,
\end{equation}
where for
\begin{equation}
 E_{1,3}(u)\coloneqq\int_{\mathbb{S}^3} |\nabla u|^3
\end{equation}
we define
\begin{equation}
\#_3 d \coloneqq \inf \left\{ E_{1,3}(u)\colon u \in W^{1,3}(\mathbb{S}^3, \mathbb{S}^2),\, \degh(u) = d \right\}.
\end{equation}
The optimality of an exponent greater than 1, together with the energy identity allowed Rivière to deduced:
\begin{theorem}[{\cite[Theorem I.1]{Riviere98}}]\label{th:Riviere}
There exist infinitely many homotopy classes of $\pi_{3}(\S^2)$ having minimizing $3$-harmonic maps, i.e., a map minimizing the $E_{1,3}$ energy in its homotopy class.
\end{theorem}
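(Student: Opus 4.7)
The strategy is to combine the asymptotic identity~\eqref{eq:optimalexponent} with a concentration-compactness analysis of minimizing sequences in a fixed homotopy class. I would let
\[
A \coloneqq \left\{ d\in\Z : \#_3 d \text{ is attained by some } u\in W^{1,3}(\S^3,\S^2)\right\}
\]
and argue by contradiction: assume that $A$ is finite and derive a lower bound on $\#_3 d$ that is incompatible with the sublinear growth forced by~\eqref{eq:optimalexponent}.

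The first step is to show that every $d\in\Z$ admits a \emph{minimizing decomposition}
\[
d = \sum_{i=1}^{N} a_i, \qquad a_i \in A\setminus\{0\},\qquad \#_3 d = \sum_{i=1}^N \#_3 a_i.
\]
To produce it, I would take a minimizing sequence $(u_k)\subset W^{1,3}(\S^3,\S^2)$ for $\#_3 d$ and carry out a Sacks--Uhlenbeck/Struwe-type bubble-tree analysis adapted to the $3$-energy. Up to a subsequence, $u_k$ converges weakly (and locally uniformly away from a finite concentration set) to a $3$-harmonic map $u_\infty\colon\S^3\to\S^2$, and after rescaling around each concentration point one extracts finitely many $3$-harmonic bubbles $\omega_1,\ldots,\omega_m\colon\S^3\to\S^2$. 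The Hopf degrees add, $d=\degh(u_\infty)+\sum_j\degh(\omega_j)$, and I would need the energy identity
\[
\#_3 d \;=\; E_{1,3}(u_\infty) + \sum_{j=1}^{m} E_{1,3}(\omega_j)
\]
without residual neck energy. A standard replacement argument then forces $u_\infty$ (if nonconstant) and each $\omega_j$ to be themselves minimizers in their own degree classes: otherwise one could substitute a better competitor and strictly decrease $\sum E_{1,3}(\cdot)$, contradicting the minimality of $(u_k)$ at degree $d$. Iterating on any factor that still fails to be attained produces a decomposition whose factors all lie in $A$.

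Granting that $A$ is finite, set $M\coloneqq \max_{a\in A}|a|$ and $c_0\coloneqq \min_{a\in A\setminus\{0\}}\#_3 a >0$. The triangle inequality $|d|\leq \sum_i|a_i|\leq NM$ forces $N\geq |d|/M$, hence
\[
\#_3 d \;\geq\; c_0\, N \;\geq\; \frac{c_0}{M}\,|d| \qquad \text{for every } d\in\Z.
\]
In particular $\liminf_{|d|\to\infty}\#_3 d /|d| >0$. On the other hand, \eqref{eq:optimalexponent} gives $\#_3 d = |d|^{3/4+o(1)}$, so $\#_3 d /|d| \to 0$. These two statements are incompatible, contradicting the finiteness of $A$, and Theorem~\ref{th:Riviere} follows.

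The step I expect to be the main obstacle is the bubble-tree energy identity for the $3$-harmonic problem on $\S^3$ into $\S^2$: one must rule out nontrivial neck energy between the body map and each bubble so that the energies decouple exactly, and simultaneously control the Hopf degree under the rescalings and weak convergence used to extract bubbles. The former is delicate because the $3$-energy on $\S^3$ is conformally invariant only tangentially to the usual $p=n$ setting, while the latter relies on continuity of $\degh$ under $W^{1,3}$-strong convergence together with its invariance under conformal rescaling. Once these analytic ingredients are in place, the counting argument above is elementary, and it is the strict inequality $3/4<1$ in the exponent of~\eqref{eq:hopf-estimate}--\eqref{eq:optimalexponent} that makes the contradiction run.
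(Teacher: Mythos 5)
Your contradiction argument --- decomposing an arbitrary degree via the energy identity into finitely many attained classes and playing the resulting linear lower bound $\#_3 d \gtrsim |d|$ against the sublinear asymptotic $\#_3 d = |d|^{3/4+o(1)}$ from \eqref{eq:optimalexponent} --- is exactly the argument the paper gives for \Cref{th:Riviere}. The only divergence is that the paper invokes the energy identity as a citation (\Cref{thm:identitylocal}, Rivi\`ere's Proposition III.1) rather than proving it, whereas you sketch a bubble-tree derivation; the points you flag as open (absence of neck energy, additivity of the Hopf degree under bubbling, and the replacement argument forcing each bubble to minimize) are precisely the content of that cited result, so your proof is correct modulo quoting it instead of re-deriving it.
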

For completeness we state the energy identity in the spirit of \cite{SacksUhlenbeck}:
\begin{theorem}[Energy identity, {\cite[Proposition III.1]{Riviere98}}]\label{thm:identitylocal}
For all $d\in\Z$, there exists a finite sequence $(d_{1},\ldots,d_{N})$ of integers such that $\sum_{i=1}^N d_i = d$, and a finite collection of maps $v_{1},\ldots,v_{N}\colon\S^{3}\to \S^{2}$ of Hopf degrees $\degh(v_{i}) = d_{i}$, such that:
\begin{equation}\label{eq:localenergyidentity}
E_{1,3}(v_{i}) = \#_3d_{i} \quad \text{and} \quad \#_3 d = \sum_{i=1}^{N} E_{1,3}(v_{i}).
\end{equation}
\end{theorem}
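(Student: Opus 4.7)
Fix $d\in\Z$ and pick a minimizing sequence $(u_k)_{k\in\N}\subset W^{1,3}(\S^3,\S^2)$ with $\degh(u_k)=d$ and $E_{1,3}(u_k)\to \#_3 d$. The uniform $W^{1,3}$ bound yields, along a subsequence, a weak limit $u_k\rightharpoonup u_\infty$ in $W^{1,3}(\S^3,\S^2)$, while the energy densities $|\nabla u_k|^3\, d\mathrm{vol}_{\S^3}$ converge weakly-$*$ to a finite nonnegative Radon measure $\mu$. By P.-L.\ Lions' concentration-compactness lemma this measure admits the decomposition
\[
\mu \;=\; |\nabla u_\infty|^3\, d\mathrm{vol}_{\S^3} \;+\; \sum_{i=1}^{N_0} c_i\,\delta_{x_i},
\]
with finitely many concentration points $x_1,\dots,x_{N_0}\in\S^3$ and masses $c_i>0$.

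\textbf{Bubble extraction.} At each concentration point $x_i$, choose scales $r_k^{(i)}\to 0^+$ capturing a fixed fraction of the concentrating energy and rescale. Since the $3$-Dirichlet energy is conformally invariant in dimension three, the rescaled maps stay uniformly bounded in $W^{1,3}$ on any ball of $\R^3$; after stereographic projection and the removable-singularity theorem in the critical Sobolev setting, their weak limit defines a bubble $v_i\in W^{1,3}(\S^3,\S^2)$ with $E_{1,3}(v_i)\le c_i$. One iterates this procedure on each bubble if it itself displays concentration. The iteration terminates in finitely many steps because every bubble $v$ with $\degh(v)\neq 0$ satisfies $E_{1,3}(v)\geq c_0>0$ by the Hopf estimate~\eqref{eq:hopf-estimate}, and the total energy is bounded.

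\textbf{Degree splitting and minimality.} The central step is to show that the Hopf degree splits additively,
\[
d \;=\; \degh(u_\infty) + \sum_i \degh(v_i).
\]
This is achieved by representing $u_k^{*}\omega_{\S^2}=d\eta_k$ with a Coulomb-type primitive $\eta_k$ (i.e., $d^{*}\eta_k=0$) enjoying uniform $W^{1,3/2}$ bounds via Hodge theory, which permits passage to the limit in $\int \eta_k\wedge d\eta_k$ separately on the bubble regions, the neck regions, and the complement. Together with the lower semicontinuity of $E_{1,3}$ and a standard gluing construction establishing the subadditivity $\#_3(d'+d'')\le \#_3 d'+\#_3 d''$, one then obtains
\[
\#_3 d \;=\; \lim_k E_{1,3}(u_k) \;\ge\; E_{1,3}(u_\infty)+\sum_{i} E_{1,3}(v_i) \;\ge\; \#_3\degh(u_\infty)+\sum_{i}\#_3\degh(v_i) \;\ge\; \#_3 d.
\]
All inequalities therefore collapse to equalities, forcing each $v_i$ (and $u_\infty$, if not constant) to minimize $E_{1,3}$ in its homotopy class and the energies to add up exactly. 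Relabelling the nontrivial components as $v_1,\dots,v_N$ produces the decomposition~\eqref{eq:localenergyidentity}.

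\textbf{Main obstacle.} The technical heart of the argument is the degree-splitting step. Unlike the Brouwer degree for equidimensional maps, the Hopf invariant is a genuinely nonlinear, nonlocal cubic functional of $\nabla u$ and is \emph{not} continuous along weakly convergent sequences in $W^{1,3}$. Controlling the Coulomb primitive $\eta_k$ on the annular neck regions separating the bubbles, where no energy concentrates yet the conformal geometry degenerates, is precisely where the fine analysis of Rivi\`ere takes place and is what makes the identity delicate.
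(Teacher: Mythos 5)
First, note that the paper does not prove this statement at all: it is quoted verbatim from Rivi\`ere's Proposition~III.1 (and its fractional analogue, \Cref{th:energy-id-frac}, is likewise cited from the literature), so your proposal can only be measured against the known proof strategy, not against an argument in this paper. Measured that way, your outline follows the expected Sacks--Uhlenbeck/concentration-compactness template, but it has a genuine gap at its core. The maps $u_k$ are merely a minimizing sequence in the homotopy class: they solve no Euler--Lagrange equation, so the tools you invoke for bubble extraction --- $\epsilon$-regularity-type removable singularity theorems, and especially the Coulomb-gauge/Hodge-theoretic control of the primitive $\eta_k$ on degenerating neck annuli --- are not available, and you cannot upgrade weak $W^{1,3}$ limits of rescalings to maps with a well-defined, additively splitting Hopf degree by the argument you sketch. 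You yourself flag the degree splitting as the ``technical heart,'' but the proposal only asserts it; since the Hopf invariant is not weakly continuous in $W^{1,3}$, this is precisely the step that carries the whole theorem, and without it the chain of inequalities (which also needs the subadditivity $\#_3(d'+d'')\le \#_3 d'+\#_3 d''$ and attainment of each $\#_3 d_i$) does not close.

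The way this is actually handled (Rivi\`ere, and in the same spirit Duzaar--Kuwert for conformally invariant energies, and \cite[Theorem 3.2]{MS-stability} in the fractional case) is variational rather than PDE-based: by a Fubini/pigeonhole argument one finds small spherical annuli on which $u_k$ has little energy, one modifies $u_k$ there by Luckhaus-type interpolation to make it constant on the annuli at $o(1)$ energy cost, and one thereby decomposes $u_k$ into finitely many maps with definite Hopf degrees summing to $d$ and total energy at most $E_{1,3}(u_k)+o(1)$; lower semicontinuity, subadditivity, and an induction on the energy (using the gap $\#_3 d\gtrsim |d|^{3/4}>0$ for $d\neq 0$) then yield both the attainment $E_{1,3}(v_i)=\#_3 d_i$ and the identity $\#_3 d=\sum_i E_{1,3}(v_i)$. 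If you want to complete your write-up, you should either carry out this cut-and-paste decomposition in place of the bubbling-with-necks analysis, or supply a genuine proof of the degree splitting and energy quantization for arbitrary minimizing sequences; as it stands the proposal is an outline whose decisive steps are assumed rather than proved.
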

 In order to prove \Cref{th:Riviere}, having \eqref{eq:optimalexponent} and \Cref{thm:identitylocal}, one can argue by contradiction. Suppose only finitely many homotopy classes admit minimizers.
 Then, letting $d_1$ denote the largest absolute value among these classes and $d_0$ the class with the smallest energy, the energy identity \eqref{eq:localenergyidentity} implies:
\begin{equation}
 \forall d\in\Z \quad \# d \ge \frac{\# d_0}{|d_1|} |d|,
\end{equation}
which contradicts \eqref{eq:optimalexponent} as $|d|\to\infty$.

In general, establishing optimality of exponents in estimates like \eqref{eq:hopf-estimate} appears to be a delicate question; see~\cite[Section 2.5.2 and Proposition 2.15]{HardtRiviere08} for related discussions. We note also that in the contradiction argument the precise value of the exponent did not play a role, we only used that it was not equal to 1.

Let now $s\in (0,1)$ and $p>1$. We denote by $W^{s,p}(\S^n)$ the Sobolev--Slobodeckij space with the semi-norm
\begin{equation}\label{eq:seminorm}
 E_{s,p}(u,\S^{n})= [u]_{W^{s,p}(\S^n)}^p\coloneqq \int_{\S^n}\int_{\S^n} \frac{|u(x) - u(y)|^p}{|x-y|^{n+sp}} \dx \dy.
\end{equation}
For $s\in (0,1)$, we define
\begin{equation}
\#_{s} \alpha \coloneqq \inf \left\{ E_{s,\frac ms}(u,\S^{n})\colon u \in W^{s,p}(\mathbb{S}^{n}, \mathbb{S}^{\ell}),\, \alpha \in \pi_{n}(\S^\ell) \right\}.
\end{equation}
In the case of elements of $\pi_3(\S^2)$ we write
\begin{equation}\label{eq:infimumoffractionalenergyhopf}
 \#_s d \coloneqq \inf \left\{ E_{s,\frac 3s}(u,\S^{3})\colon u \in W^{s,\frac 3s}(\mathbb{S}^{3}, \mathbb{S}^{2}),\, \deg_H(u)=d \right\}.
\end{equation}

In the fractional setting a counterpart to \eqref{eq:hopf-estimate} is possible and was established in \cite[Theorem 1.1]{Schikorra-VanSchaftingen-Hopf} for maps in $\pi_{4m-1}(\S^{2m})$ for the range $s\in[1-\frac{1}{4m}, 1)$
\begin{equation}\label{eq:fractioanlhopfestimate}
\abs{\deg_H(f)} \lesssim [f]^{\frac{4m}{s}}_{W^{s,\frac{4m-1}{s}}(\S^{4m-1})}.
\end{equation}
See also~\cite{ParkSchikorra} for related results. It remains an open problem to establish whether or not the fractional exponent range can be extended to $s\in(0,1)$.

Using Sobolev embedding, one can extend \Cref{th:Riviere} to fractional settings with $m=1$ and $s>\frac{3}{4}$. Indeed, by~\cite[Theorem 3.2]{MS-stability}, we have the following energy identity in the spirit of \cite{Sucks}:
\begin{theorem}[{\cite[Theorem 3.2]{MS-stability}}]\label{th:energy-id-frac}
Let $s\in(0,1)$, $n,\ell \in\N$, and assume that either $(n,\ell)=(1,1)$ or $\ell\ge 2$. For each $\alpha \in \pi_n\left(\mathbb{S}^{\ell}\right) \backslash\{0\}$ there exists a finite sequence $\left(\alpha_i\right)_{i=1}^N \subset \pi_n\left(\mathbb{S}^{\ell}\right) \backslash\{0\}$ such that
\begin{itemize}
\item[(1)] $\alpha=\sum_{i=1}^N \alpha_i$,
\item[(2)] $\#_{s} \alpha=\sum_{i=1}^N \#_{s} \alpha_i$,
\item[(3)] $\#_{s} \alpha_i$ are attained for each $i \in\{1, \ldots, N\}$.
\end{itemize}
\end{theorem}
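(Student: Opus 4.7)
The plan is a Sacks--Uhlenbeck-style bubble-extraction argument in the nonlocal, conformally critical space $W^{s,n/s}(\S^n,\S^\ell)$. Fix $\alpha\in\pi_n(\S^\ell)\setminus\{0\}$ and take a minimizing sequence $(u_k)\subset W^{s,n/s}(\S^n,\S^\ell)$ representing $\alpha$ with $E_{s,n/s}(u_k,\S^n)\to\#_s\alpha$. Uniform boundedness of the seminorms yields, along a subsequence, a weak $W^{s,n/s}$-limit $u_\infty$; compact embeddings below the critical exponent give strong $L^q_{\loc}$ and a.e.\ convergence, so $u_\infty$ is $\S^\ell$-valued, and lower semicontinuity produces $E_{s,n/s}(u_\infty,\S^n)\le\liminf_k E_{s,n/s}(u_k,\S^n)$.

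Because $n/s$ is the conformally critical exponent for $W^{s,n/s}$, loss of compactness can come only from concentration at finitely many points. I would regard the Gagliardo densities $\rho_k(x)\coloneqq\int_{\S^n}|u_k(x)-u_k(y)|^{n/s}|x-y|^{-2n}\,dy$ as measures on $\S^n$ and run a Lions-type concentration-compactness argument to isolate a finite collection $x_1,\dots,x_N$, each carrying at least a fixed quantum of energy dictated by the smallest nontrivial value of $\#_s$. At each $x_j$ I would precompose $u_k$ with a stereographic chart and a dilation at a scale $\lambda_{k,j}\to 0$ chosen so that a prescribed share of the concentrating energy sits in the unit ball; passing to the limit produces a nontrivial ``bubble'' $v_j\in W^{s,n/s}(\S^n,\S^\ell)$ of some class $\alpha_j\neq 0$.

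The hardest step, and the real content of the theorem, is the \emph{nonlocal neck analysis}. Unlike in the local $W^{1,p}$ case, where necks carry little energy simply because $|\nabla u_k|$ is small there, the Gagliardo seminorm couples every pair of points through the long-range kernel $|x-y|^{-(n+sp)}$, so one must control simultaneously the interactions between the base map $u_\infty$, the bubbles $v_j$, and the transition annuli at widely separated scales $1,\lambda_{k,1},\ldots,\lambda_{k,N}$. The standard route is to split the double integral into regions of comparable scale, apply Luckhaus-type extension and gluing lemmas in $W^{s,n/s}$ to paste the bubbles and the weak limit into a competitor $\tilde u_k$ that lies in the class $\alpha$, and prove
\begin{equation*}
E_{s,n/s}(u_k,\S^n)\ge E_{s,n/s}(u_\infty,\S^n)+\sum_{j=1}^N E_{s,n/s}(v_j,\S^n)+o(1).
\end{equation*}
Combined with the reverse inequality from lower semicontinuity, this yields both the homotopy decomposition $\alpha=[u_\infty]+\sum_j\alpha_j$ in $\pi_n(\S^\ell)$ and the energy identity.

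Finally, each $v_j$ must minimize in its class $\alpha_j$, for otherwise a strictly better competitor re-inserted at scale $\lambda_{k,j}$ would contradict minimality of $(u_k)$. If $u_\infty$ is non-null-homotopic, include $[u_\infty]$ among the $\alpha_i$; otherwise discard it. Relabel and iterate the construction on any $\alpha_i$ whose infimum is not yet attained; termination after finitely many rounds is forced because each round removes a quantized portion of $\#_s\alpha$, producing the desired finite sequence $(\alpha_i)_{i=1}^N\subset\pi_n(\S^\ell)\setminus\{0\}$ satisfying $(1)$--$(3)$.
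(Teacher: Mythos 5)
First, note that the paper you are working from does not prove this statement at all: it is quoted verbatim from \cite[Theorem 3.2]{MS-stability} and used as a black box, so the only meaningful question is whether your sketch would stand on its own. As written it does not: it is an outline of a Sacks--Uhlenbeck-type bubbling scheme in which every genuinely hard step is named rather than carried out. The decisive point is the one you yourself flag as ``the hardest step'': the nonlocal neck analysis. Conclusions (1)--(2) require \emph{both} the lower bound (no energy is lost or gained in the neck regions, and the homotopy class decomposes as $\alpha=[u_\infty]+\sum_j\alpha_j$ across necks --- note that the homotopy class is not continuous under weak $W^{s,n/s}$-convergence, so this decomposition is itself part of the work) \emph{and} the reverse inequality $\#_s\alpha\le\sum_i\#_s\alpha_i$, which comes from a separate gluing/subadditivity construction of competitors in the class $\alpha$ out of near-minimizers of the $\alpha_i$. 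Your proposal gestures at the first via unspecified ``Luckhaus-type extension and gluing lemmas in $W^{s,n/s}$'' and never states the second; without both, you get neither the equality in (2) nor the attainment in (3). Your argument that each bubble $v_j$ minimizes (``re-insert a strictly better competitor at scale $\lambda_{k,j}$'') secretly invokes exactly this missing gluing-with-$o(1)$-error lemma at widely separated scales, which in the nonlocal setting is precisely where the long-range interactions must be controlled.

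A second genuine gap is quantization. Both the finiteness of the concentration set, the nontriviality of the bubbles, and the termination of your iteration rest on the energy gap $\inf\{\#_s\beta:\beta\in\pi_n(\S^\ell)\setminus\{0\}\}>0$, i.e.\ that maps of small critical fractional energy are null-homotopic. You assume this (``a fixed quantum of energy dictated by the smallest nontrivial value of $\#_s$'') without proof or reference; in $W^{s,n/s}$ this is a nontrivial ingredient and must be established or cited. Finally, the hypothesis ``$(n,\ell)=(1,1)$ or $\ell\ge2$'' plays no role anywhere in your argument, which is a warning sign: it enters the actual proof through the topological/gluing constructions, and a proof in which it is invisible is likely incomplete. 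In short, the strategy is a reasonable and standard one, but as a proof it reduces the theorem to several unproved lemmas that constitute its real content.
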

Furthermore, the Sobolev embedding yields
\[
 [u]_{W^{s,\frac 3s}(\S^3)} \aleq \brac{\int_{\S^3} |\nabla u|^3}^\frac13
\]
which combined with \eqref{eq:optimalexponent} for large enough in absolute value $d$ gives the reverse inequality
\begin{equation}\label{eq:weakreverseinequality}
\#_{s} d \aleq \brac{\#_{3} d }^{\frac 1s}\aleq |d|^{\frac{3}{4s}}.
\end{equation}
For $s>\frac{3}{4}$ the exponent \eqref{eq:weakreverseinequality} is less than 1, allowing the same contradiction argument as in \Cref{th:Riviere}\footnote{Note that this is the same restriction on $s$ as in \cite{Schikorra-VanSchaftingen-Hopf}}.

The main result of this paper extends this existence result to the full range $s\in(0,1)$.
\begin{theorem}\label{th:main}
Let $s\in(0,1)$. There exist infinitely many homotopy classes of $\pi_{3}(\S^{2})$ having a minimizing $W^{s,\frac{3}{s}}(\S^{3},\S^{2})$-harmonic map.
\end{theorem}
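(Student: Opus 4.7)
The plan is to carry out Rivière's contradiction argument in the fractional setting. Suppose only finitely many classes $c_1,\dots, c_M \in \pi_3(\S^2)$ admit a minimizer of $E_{s,3/s}$, and set $m_0 := \min_i \#_s c_i > 0$ and $M_0 := \max_i |c_i|$. For every $d \in \Z$, the fractional energy identity \Cref{th:energy-id-frac} decomposes $d = \sum_{j=1}^{N} \alpha_j$ with each $\alpha_j$ admitting a minimizer, so $\alpha_j \in \{c_1,\dots,c_M\}$; this gives $|d| \le N M_0$ and $\#_s d = \sum_j \#_s \alpha_j \ge N m_0$. Hence
\[
 \#_s d \;\ge\; \frac{m_0}{M_0}\,|d|, \qquad d \in \Z.
\]
Consequently it suffices to exhibit a sequence $d_k \to \infty$ with $\#_s d_k = o(|d_k|)$.

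For $s > 3/4$ this is furnished by Rivière's sharp bound $\#_3 d \lesssim |d|^{3/4}$ together with the critical Sobolev embedding $W^{1,3}(\S^3) \hookrightarrow W^{s,3/s}(\S^3)$, cf.\ \eqref{eq:weakreverseinequality}. The substance of the new argument is therefore the complementary range $s \in (0, 3/4]$, for which the exponent $3/(4s) \ge 1$ coming from that route is no longer sublinear.

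For this range my plan is to build test maps $u_k \in W^{s,3/s}(\S^3,\S^2)$ directly. The main tool is the conformal invariance of the seminorm $[\,\cdot\,]_{W^{s,3/s}(\S^3)}$, a consequence of the critical scaling relation $s \cdot \frac{3}{s} = 3 = \dim \S^3$, which allows a fixed smooth Hopf-degree-one model $\phi \in C^\infty(\S^3, \S^2)$ to be rescaled into an arbitrarily small ball without changing its self-energy. A fractional adaptation of Rivière's linking construction from \cite{Riviere98}, in which roughly $N$ linked, rescaled copies realize a Hopf invariant of order $N^{4/3}$ at a combined self-energy of order $N$, should then produce a sequence with $d_k \to \infty$ and self-energy contribution $O(N_k) = o(|d_k|)$.

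The main obstacle, absent from the $W^{1,3}$ setting, is the non-locality of $E_{s,3/s}$: superposing rescaled copies produces cross-terms
\[
 \iint_{\S^3 \times \S^3}\frac{|u_k(x)-u_k(y)|^{3/s}}{|x-y|^{6}}\,\dx\,\dy
\]
(the kernel exponent being $3 + s\cdot \tfrac{3}{s} = 6$) coming from pairs $(x,y)$ lying in distinct copies, and these do not cancel as in the local Dirichlet case. Controlling them is the heart of the proof: the integration domain should be split into a near-diagonal regime (absorbed into the self-energy via conformal invariance) and a far regime, where the kernel decay $|x-y|^{-6}$ combined with the pointwise bound $|u_k| \le 1$ forces the total cross contribution to remain $o(|d_k|)$. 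Once this bookkeeping is in place the resulting sublinear upper bound on $\#_s d_k$ contradicts the lower bound above and proves the theorem.
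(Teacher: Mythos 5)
Your reduction is sound and coincides with the paper's: assuming finitely many classes admit minimizers, the energy identity (\Cref{th:energy-id-frac}) forces $\#_s d \geq \frac{m_0}{M_0}|d|$ for all $d$, so everything hinges on producing competitors with $\#_s d = o(|d|)$; likewise your treatment of the range $s>\frac34$ via \eqref{eq:weakreverseinequality} is fine. But for $s\in(0,\frac34]$, which is the entire point of the paper, your construction is only a sketch and the sketch has a genuine gap. If you take a fixed Hopf-degree-one model $\phi$ and place conformally rescaled copies of it in $N$ \emph{disjoint small balls}, glued by a common constant, then the Hopf invariant is additive over the balls, so you obtain degree $N$ at energy of order $N$ --- no sublinear gain at all. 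The superlinear growth ($k$ ``copies'' producing Hopf invariant $k^2$) comes from \emph{linking}: the supports must be pairwise linked solid tori (tubular neighborhoods of distinct Hopf fibers), and a thin degree-one tube map is \emph{not} a conformal rescaling of a fixed map, so conformal invariance of the $W^{s,3/s}$ seminorm does not deliver the ``combined self-energy of order $N$'' you assert. Estimating precisely that self-energy at the right rate in the nonlocal setting, together with the cross-terms you defer to ``bookkeeping,'' is the actual content of the theorem, and your proposal leaves both unproved; as written, the mechanism (small disjoint balls $+$ linking) is internally inconsistent.

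For comparison, the paper avoids any direct linked-tube construction. It proves $\#_s d\lesssim |d|^{3/4}$ (\Cref{3-3-thm:asymptote}) by taking $u=v\circ h$ with $v\colon\S^2\to\S^2$ of degree $k$, so $\deg_H(u)=k^2$, and showing via the co-area formula that integrating the kernel $|x-y|^{-(3+sp)}$ along the one-dimensional fibers of $h$ yields $\int_{h^{-1}(z_2)}\int_{h^{-1}(z_1)}|x-y|^{-(3+sp)}\,\dx\,\dy\lesssim |z_1-z_2|^{-(2+sp)}$, hence $E_{s,p}(v\circ h,\S^3)\lesssim E_{s,p}(v,\S^2)$. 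The two-dimensional fractional energy of the standard $k$-bubble map (bubbles of radius $\lambda/\sqrt k$, gradient $\lesssim\sqrt k$) is then bounded by $k^{3/2}=(k^2)^{3/4}$, where the non-locality is controlled not by a near/far kernel split but by the patching lemma (\Cref{le:patching}, at the cost of a factor $2^p$) together with the gluing lemma (\Cref{le:gluing}); general $d$ with $k^2<d\le (k+1)^2$ is handled by opening the map (\Cref{le:holeing}, \Cref{cor:energy-holeing}) and inserting $d-k^2\le 2\sqrt d$ degree-one Hopf bubbles of uniformly bounded energy (\Cref{a1-4-cor:hopf-deg1}). If you wish to salvage your direct route you would have to (i) replace the ball-supported rescaled copies by genuinely linked tube configurations and (ii) prove the fractional self-energy bound for such thin tubes at scale $k^{-1/2}$ --- which is exactly the difficulty that the paper's fiber-integration argument is designed to bypass.
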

In particular, we obtain that for $m=1$ the exponent in \eqref{eq:fractioanlhopfestimate} is optimal, see \Cref{3-3-thm:asymptote}.

\subsection*{Notation}

A map \( u \) is said to be a \emph{minimizing} \( W^{s,\frac{3}{s}}(\mathbb{S}^{3},\mathbb{S}^{2}) \)-\emph{harmonic map} if it minimizes the energy \( E_{s,\frac{3}{s}} \) within its homotopy class. For any open set \( \Omega \subset \mathbb{S}^{n} \), we define
\[
E_{s,p}(u,\Omega) \coloneqq \int_\Omega \int_\Omega \frac{|u(x) - u(y)|^p}{|x - y|^{n + sp}} \, \mathrm{d}x \, \mathrm{d}y.
\]

We denote by \( B(x,r) \) the geodesic ball in \( \mathbb{S}^m \) centered at \( x \) with radius \( r \). When the center is irrelevant, we simply write \( B(r) \). We use the notation \( A \lesssim B \) to indicate that there exists a constant \( C \) such that \( A \leq C B \), where \( C \) is independent of any essential parameter.

{\bf Acknowledgment.} The project is co-financed by
\begin{itemize}
 \item (AG, KM) the Polish National Agency for Academic Exchange within Polish Returns Programme -
BPN/PPO/2021/1/00019/U/00001;
\item (KM) the National Science Centre, Poland grant No. 2023/51/D/ST1/02907.
\end{itemize}

\section{Prerequisites}
To prepare for the proof of \Cref{th:main}, we begin by recalling the key properties of the classical Hopf map $h\colon \S^3 \to \S^2$, which will play a central role in our construction. We also collect several auxiliary tools that will allow us to localize energy, glue maps, and modify them homotopically while controlling the Sobolev energy.
\begin{definition} The Hopf map $h\colon \S^3\to\S^2$ is defined by:
\begin{equation}\label{eq:Hopfmap}
\begin{split}
 h\colon \S^3 \subset\R^4\simeq \C\times \C \to \S^2 \subset \R^3\simeq \R\times\C \\
 h(w,z) = (|w|^2 - |z|^2,2w\overline{z}).
 \end{split}
\end{equation}
\end{definition}
We will use the following properties:
\begin{enumerate}[label=(\Alph*)]
 \item $\deg_H(h) = 1$ (see, e.g., \cite[Example 4.45]{Hatcher});
 \item\label{item:hopfproperties} $|h^\ast \omega_{\S^2}|=\frac{1}{2}|\nabla h|^2 = 4$.
\end{enumerate}
The following technical lemmas will be used to estimate the fractional Sobolev energy of maps defined on unions of sets or modified locally. These are essential for adapting Rivière’s construction to the fractional framework.

The first lemma allows us to localize the $W^{s,p}$-energy of a map with a "buffer zone".
\begin{lemma}[{\cite[Lemma 2.2]{MonteilVanSchaftigen}}]\label{le:gluing}
Let $s\in (0,1]$, $1\leq p< \infty$, and let $\mathcal N$ be a connected Riemannian manifold. There exists a constant $C= C(s,p,m)>0$,  such that for every $\eta\in (0,1)$, every open set $A\subset \R^m$, every measurable function $u\colon A\to \mathcal N$, and every $\rho>0$ such that $\B(\rho)\setminus \overline{B(\eta\rho)}\subset A$,
\begin{equation}
E_{s,p}(u,A) \leq \left( 1 + \frac{C}{(1-\eta)^{sp+1}} \right)E_{s,p}(u,B(\rho)) +
\left( 1 + \frac{C\eta^{m}}{1-\eta} \right) E_{s,p}(u,A\setminus B(\eta\rho)).
\end{equation}
\end{lemma}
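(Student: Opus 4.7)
Decompose the double integral defining $E_{s,p}(u,A)$ into four regions determined by whether each variable lies in $B(\eta\rho)$ or in its complement $A \setminus B(\eta\rho)$. The near-near piece $B(\eta\rho)^2 \subset B(\rho)^2$ is absorbed into $E_{s,p}(u, B(\rho))$, and the far-far piece is exactly $E_{s,p}(u, A \setminus B(\eta\rho))$. The two symmetric mixed pieces require a buffer trick. For a mixed configuration $(x,y) \in B(\eta\rho) \times (A \setminus B(\eta\rho))$, split further: if $y \in B(\rho) \setminus B(\eta\rho)$, then $(x,y) \in B(\rho)^2$ and this is again covered by $E_{s,p}(u, B(\rho))$; so only the ``far mixed'' configuration $x \in B(\eta\rho)$, $y \in A \setminus B(\rho)$ genuinely remains.

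For this remaining configuration, introduce a pivot $z$ ranging over the buffer annulus $C := B(\rho) \setminus \overline{B(\eta\rho)}$, which lies in $A$ by hypothesis. Convexity combined with the triangle inequality gives $|u(x)-u(y)|^p \leq 2^{p-1}\bigl(|u(x)-u(z)|^p + |u(z)-u(y)|^p\bigr)$; average $z$ uniformly over $C$. The first summand $|u(x)-u(z)|^p$ targets $E_{s,p}(u,B(\rho))$: integrate $y$ out first using $|x-y| \geq (1-\eta)\rho$ to gain a factor $\lesssim ((1-\eta)\rho)^{-sp}$, then use $|x-z| \leq (1+\eta)\rho \leq 2\rho$ to reinsert the Gagliardo kernel on $B(\rho)^2$. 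Combined with $|C|^{-1} \asymp \rho^{-m}(1-\eta)^{-1}$, this produces the stated $(1-\eta)^{-(sp+1)}$ prefactor. The second summand $|u(z)-u(y)|^p$ targets $E_{s,p}(u, A \setminus B(\eta\rho))$: integrating $x$ first yields $\int_{B(\eta\rho)} |x-y|^{-m-sp}\dx \lesssim \eta^m\rho^m(|y|-\eta\rho)^{-m-sp}$, and combining with $|C|^{-1}$ produces the desired $\eta^m/(1-\eta)$ prefactor.

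\textbf{Main obstacle.} The principal technical step is the final absorption in the second summand: converting the constant weight $(|y|-\eta\rho)^{-m-sp}$ into the Gagliardo kernel $|z-y|^{-m-sp}$ so that the bound closes on $E_{s,p}(u, A\setminus B(\eta\rho))$. A pointwise comparison fails when $z \in C$ sits close to $\partial B(\rho)$ and $y$ lies just outside, since then $|z-y|$ can be arbitrarily small while $|y|-\eta\rho \geq (1-\eta)\rho$. I would instead prove the integrated comparison $|C|(|y|-\eta\rho)^{-m-sp} \lesssim \int_C |z-y|^{-m-sp}\dz$ by splitting the $y$-range into the far regime $|y| \geq 2\rho$---where the triangle inequality gives $|z-y| \asymp |y| \asymp |y|-\eta\rho$ uniformly in $z \in C$---and the near regime $\rho \leq |y| \leq 2\rho$---where a geometric argument shows that a positive fraction of $C$ lies within distance $\lesssim |y|-\eta\rho$ of $y$, providing the required lower bound. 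After this swap, the double integral lies inside $E_{s,p}(u, A\setminus B(\eta\rho))$, and careful bookkeeping of constants assembles the full inequality.
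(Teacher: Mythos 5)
First, a remark on scope: the paper does not prove this lemma at all — it is imported verbatim from Monteil--Van Schaftingen — so your proposal can only be judged against the statement itself, not against an in-paper argument. Your skeleton (split $A\times A$ into pairs inside $B(\eta\rho)$, pairs inside $A\setminus B(\eta\rho)$, and mixed pairs; reduce the mixed part to $x\in B(\eta\rho)$, $y\in A\setminus B(\rho)$; average a pivot $z$ over the buffer annulus $C=B(\rho)\setminus\overline{B(\eta\rho)}$) is the right one, and your treatment of the summand $|u(x)-u(z)|^p$ is correct and does produce the $(1-\eta)^{-(sp+1)}$ coefficient. The genuine gap is in the summand $|u(z)-u(y)|^p$. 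After you replace the $x$-integral by the sup-bound $\int_{B(\eta\rho)}|x-y|^{-m-sp}\dx\le \omega_m\eta^m\rho^m(|y|-\eta\rho)^{-m-sp}$ and divide by $|C|\simeq \rho^m(1-\eta)$, what your chain requires is
\[
\int_{A\setminus B(\rho)}\frac{1}{(|y|-\eta\rho)^{m+sp}}\int_C |u(z)-u(y)|^p\dz\dy \;\lesssim\; E_{s,p}\big(u,A\setminus B(\eta\rho)\big)
\]
with a constant independent of $\eta$, and this is false for $m\ge 2$: take $\rho=1$, $t=1-\eta$, and $u\equiv a$ on $\{\eta<|x|<\eta+t/4\}$, $u\equiv b\neq a$ on $\{|x|>\eta+t/2\}$ (smoothed across the gap). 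Then the left-hand side is of order $t\cdot t^{1-m-sp}=t^{2-m-sp}$ (the inner shell has volume $\simeq t$ and the $y$-integral concentrates on $1<|y|<1+t$), while $E_{s,p}(u,A\setminus B(\eta))\simeq t^{1-sp}$, a discrepancy of order $(1-\eta)^{-(m-1)}$.

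The bridging claims you propose for the ``main obstacle'' do not repair this. The direction of failure of the pointwise comparison is the opposite of what you state: one needs $(|y|-\eta\rho)^{-m-sp}\lesssim |z-y|^{-m-sp}$, which is harmless when $|z-y|$ is small and fails when $z$ is far from $y$ (e.g.\ $y$ just outside $\partial B(\rho)$ and $z$ diametrically across, so $|z-y|\simeq 2\rho\gg(1-\eta)\rho$). The integrated comparison $|C|(|y|-\eta\rho)^{-m-sp}\lesssim\int_C|z-y|^{-m-sp}\dz$ is likewise false uniformly in $\eta$ for $m\ge2$: with $\rho=1$, $t=1-\eta$ and $|y|=1+t$ the left side is $\simeq t^{1-m-sp}$ while the right side is $\simeq t^{-sp}$, again a loss of $(1-\eta)^{-(m-1)}$; correspondingly, the ``positive fraction of $C$ within distance $\lesssim|y|-\eta\rho$ of $y$'' claim fails, since that portion has measure $\simeq (1-\eta)^m\rho^m$ against $|C|\simeq(1-\eta)\rho^m$. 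Moreover, even where an unweighted kernel comparison holds, it cannot simply be inserted under $\int_C|u(z)-u(y)|^p\dz$, because the integrand depends on $z$. The root cause is the crude sup-bound on the $x$-integral, which overshoots by a factor $\simeq(\rho/(|y|-\eta\rho))^m$ exactly for the near-boundary $y$ that dictate the sharp constant; the sharp form requires keeping $|x-y|^{-m-sp}$ and comparing it with $|z-y|^{-m-sp}$ only for $z$ in a part of the annulus adapted to the pair $(x,y)$. What your argument does yield — since for $\eta$ bounded away from $1$ one has $|z-y|\le \tfrac{2}{1-\eta}(|y|-\eta\rho)$ pointwise — is the lemma with $\eta$-dependent constants, which would in fact suffice for this paper (the lemma is only invoked with $\eta=\tfrac12$), but it is strictly weaker than the stated inequality with coefficients $1+C(1-\eta)^{-sp-1}$ and $1+C\eta^m(1-\eta)^{-1}$.
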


We use the next lemma for connecting together maps defined on a finite (or potentially countably infinite) number of small patches:
\begin{lemma}[{\cite[Lemma 2.3]{MonteilVanSchaftigen}}]\label{le:patching}
Let $s\in (0,1)$, $1\leq p< \infty$. Let $I$ be a finite or countably infinite set and for each $i\in I$ let $u_i\colon \S^3 \to \S^2 $ be a measurable map. If there exists $b\in \S^2 $ and a collection $(A_i)_{i\in I}$ of open subsets of $\S^3 $ such that $A_i\cap A_j = \emptyset$ for $i\neq j$ and $u_i(x)\equiv b$ for $x\in \S^3 \setminus A_i$, then for a map defined as:
\begin{equation}
u(x) =
\begin{cases}
u_i(x) &\text{ if } x\in A_i \\
b &\text{otherwise}
\end{cases}
\end{equation}
we have
\begin{equation}
E_{s,p}(u, \S^3 ) \leq 2^p \sum_{i\in I} E_{s,p}(u_i, \S^3 ).
\end{equation}
\end{lemma}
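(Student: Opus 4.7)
The plan is to reduce the estimate to a pointwise inequality controlling $|u(x)-u(y)|$ by $\sum_{i\in I}|u_i(x)-u_i(y)|$, and then raise to the $p$-th power and integrate. The key observation is that since the sets $A_i$ are pairwise disjoint and each $u_i$ equals $b$ outside $A_i$, for every fixed pair $(x,y)\in \S^3\times\S^3$ at most two of the quantities $|u_i(x)-u_i(y)|$ can be nonzero: one coming from the unique index $i$ (if any) with $x\in A_i$, and one from the unique index $j$ (if any) with $y\in A_j$.

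First I would verify by a short case analysis that
\[
|u(x)-u(y)| \le \sum_{i\in I} |u_i(x)-u_i(y)|.
\]
If $x,y\in A_i$ for the same $i$, both sides equal $|u_i(x)-u_i(y)|$. If $x\in A_i$ and $y\in A_j$ with $i\neq j$, then $u_i(y)=u_j(x)=b$, so the right-hand side is bounded below by $|u_i(x)-b|+|b-u_j(y)|$, which in turn bounds $|u(x)-u(y)|=|u_i(x)-u_j(y)|$ via the triangle inequality through $b$. If only one of $x,y$ lies in some $A_i$, the single nonzero summand already equals $|u(x)-u(y)|$; and if neither does, both sides vanish.

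Next, using that at most two summands on the right are nonzero, I would estimate
\[
\Bigl(\sum_{i\in I}|u_i(x)-u_i(y)|\Bigr)^p \le 2^p \max_{i\in I} |u_i(x)-u_i(y)|^p \le 2^p \sum_{i\in I}|u_i(x)-u_i(y)|^p.
\]
Dividing by $|x-y|^{3+sp}$, integrating over $\S^3\times\S^3$, and using Tonelli's theorem to exchange the (at most countable) sum with the integral, gives
\[
E_{s,p}(u,\S^3) \le 2^p \sum_{i\in I} E_{s,p}(u_i,\S^3),
\]
as required.

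The statement is elementary, so there is no serious obstacle beyond careful bookkeeping. The only delicate points are making sure the pointwise inequality is genuinely valid in every case — in particular the cross case $i\neq j$, where the triangle inequality must be applied through the common value $b$ — and, when $I$ is countably infinite, justifying the interchange of sum and integral, which is immediate from nonnegativity of the integrand.
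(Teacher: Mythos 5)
Your argument is correct: the case analysis giving the pointwise bound $|u(x)-u(y)|\le\sum_{i\in I}|u_i(x)-u_i(y)|$ (using disjointness of the $A_i$ and the triangle inequality through the common value $b$), the observation that at most two summands are nonzero, and the interchange of sum and integral by nonnegativity together yield the stated estimate with constant $2^p$. The paper itself does not prove this lemma but quotes it from Monteil--Van Schaftingen, and your proof is essentially the standard argument behind that reference, so there is nothing to add.
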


The last lemma will allow us to "open" maps, that is produce a homotopic map which is locally constant, equal to some chosen point. Its corollary states, that we retain control over the $W^{s,p}$-energy of such an "opened" map.
\begin{lemma}[{\cite[Lemma 3.4]{VanSchaftingen2020}}]\label{le:holeing}
For every $b \in \S^2 $ and every $\varepsilon>0$, there exists a map $\Theta \in C^1(\S^2 , \S^2 )$ which is homotopic to the identity and such that $\Theta\equiv b$ in a neighborhood of $b$ and for every $x, y \in \S^2 $, we have
$d_{\S^2 }(\Theta(x), \Theta(y))\leq (1+\varepsilon)d_{\S^2 }(x,y)$.
\end{lemma}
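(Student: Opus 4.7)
The plan is to build $\Theta$ as a radial deformation centered at $b$. In geodesic polar coordinates $(s,\theta)$ around $b$ (so $s = d_{\S^2}(b,\cdot) \in [0,\pi]$), set
\[
\Theta(\exp_b(s\theta)) = \exp_b(f(s)\,\theta)
\]
for a smooth scalar profile $f\colon [0,\pi] \to [0,\pi]$. The four properties asked of $\Theta$ then translate into four conditions on $f$: (i) $f \equiv 0$ on an initial interval $[0,\delta_1]$, which forces $\Theta \equiv b$ on $B(b,\delta_1)$; (ii) $f(s) = s$ on a terminal interval $[\delta_2,\pi]$, which makes $\Theta$ the identity near the antipode $-b$, giving both smoothness at the second pole of the chart and degree one (hence $\Theta \simeq \mathrm{id}$); (iii) $f'(s) \le 1+\varepsilon$ everywhere, controlling the radial part of $d\Theta$; (iv) $f(s) \le s$ on $[0,\delta_2]$ with $\delta_2 \le \pi/2$, so that the tangential part $\sin(f(s))/\sin(s)$ of $d\Theta$ is bounded by $1$.

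To produce such an $f$, fix $\delta_2 \in (0,\pi/2)$ and choose $\delta_1$ strictly less than $\delta_2\varepsilon/(1+\varepsilon)$. On $[\delta_1,\delta_2]$ take $f' = g$ for a smooth nonnegative unimodal bump $g$ with $g(\delta_1)=0$, $g(\delta_2)=1$, $g \le 1+\varepsilon$, and $\int_{\delta_1}^{\delta_2} g = \delta_2$; such a $g$ exists precisely because the required average $\delta_2/(\delta_2-\delta_1)$ is strictly below $1+\varepsilon$. Extending $g$ by $0$ on $[0,\delta_1]$ and by $1$ on $[\delta_2,\pi]$ and setting $f(s) = \int_0^s g$ yields the desired profile. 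Conditions (i)--(iii) are manifest, and condition (iv) follows from unimodality: the difference $h(s) = f(s) - s$ satisfies $h' = g - 1$, $h(\delta_1) = -\delta_1 < 0$, $h(\delta_2) = 0$, and $h'$ changes sign from negative to positive exactly once, so $h$ first decreases and then increases back to $0$, giving $h \le 0$ on $[\delta_1,\delta_2]$.

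With $f$ in hand the verification is immediate. Smoothness: $\Theta$ is $C^\infty$ on $\S^2 \setminus \{b,-b\}$ by smoothness of $f$; near $b$ it is constantly $b$ and near $-b$ it is the identity, hence $C^\infty$ at both poles. Homotopy to the identity: the interpolation
\[
H_t(\exp_b(s\theta)) = \exp_b(((1-t)s + tf(s))\,\theta), \quad t \in [0,1],
\]
connects $\mathrm{id}$ to $\Theta$ continuously, continuity at $b$ following from $f(0)=0$ and at $-b$ from $f(s)=s$ near $\pi$. Lipschitz bound: the polar-coordinate metric is $ds^2 + \sin^2(s)\,d\theta^2$, so $d\Theta$ is block-diagonal with radial entry $f'(s)$ and tangential entry $\sin(f(s))/\sin(s)$; condition (iii) bounds the former by $1+\varepsilon$, and (i), (ii), (iv) combined with monotonicity of $\sin$ on $[0,\pi/2]$ bound the latter by $1$. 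Hence $\|d\Theta\|_{\mathrm{op}} \le 1+\varepsilon$ pointwise on $\S^2$, and integrating along a minimizing geodesic from $x$ to $y$ upgrades this to $d_{\S^2}(\Theta(x),\Theta(y)) \le (1+\varepsilon)\,d_{\S^2}(x,y)$.

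The main technical care point is the tight interplay between (iii) and (iv): matching $f(\delta_1)=0$ and $f(\delta_2)=\delta_2$ with $f'\le 1+\varepsilon$ forces the average slope on $[\delta_1,\delta_2]$ to be $\le 1+\varepsilon$, while $f\le s$ needs essentially the opposite inequality on the same ratio, so $\delta_1$ is pinned to the borderline value $\delta_2\varepsilon/(1+\varepsilon)$. Taking $\delta_1$ strictly below this threshold opens a quantitative safety margin that accommodates the smoothing of $f'$ to full $C^\infty$ regularity without breaking either (iii) or (iv).
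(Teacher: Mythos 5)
Your construction is correct: the radial profile $f$ with $f\equiv 0$ near $0$, $f(s)=s$ past $\delta_2\le\pi/2$, $f'\le 1+\varepsilon$ and $f\le s$ does give a $C^1$ map with block-diagonal differential of norm at most $1+\varepsilon$, constant near $b$ and homotopic to the identity via the linear interpolation of profiles. The paper itself offers no proof here — it quotes the lemma from Van Schaftingen's notes — and your argument is essentially the same radial "cap-collapsing" construction used there, so nothing further is needed.
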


\begin{corollary}\label{cor:energy-holeing}
In particular, for any given $u\colon \S^3\to\S^2$ and $b\in u(\S^3)\subset \S^2$, any $\eps>0$, we have for $\Theta$ from \Cref{le:holeing} 
\begin{equation}
E_{s,p}(\Theta\circ u, \S^3 ) \leq (1+\varepsilon)^pE_{s,p}(u,\S^3 ),
\end{equation}
$\Theta\circ u \sim u$ and $\Theta\circ u$ is locally constant (on the neighborhood of the fiber $u^{-1}(b)$).
\end{corollary}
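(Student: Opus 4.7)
The statement is essentially a pointwise-to-integral transcription of \Cref{le:holeing}, so the argument is short and splits into three parts: the energy estimate, the homotopy, and the local constancy of $\Theta\circ u$. For the energy bound I fix $x,y\in\S^{3}$ and apply \Cref{le:holeing} to the pair $u(x),u(y)\in\S^{2}$, obtaining
\[
 d_{\S^{2}}(\Theta(u(x)),\Theta(u(y)))\le (1+\eps)\,d_{\S^{2}}(u(x),u(y)).
\]
Since the map $\Theta$ produced in the proof of \Cref{le:holeing} is a small $C^{1}$-perturbation of the identity on $\S^{2}$ that coincides with $\mathrm{id}_{\S^{2}}$ outside a small neighborhood of $b$, this geodesic Lipschitz estimate transfers (after a harmless adjustment of $\eps$) to the ambient Euclidean distance inherited from $\S^{2}\hookrightarrow\R^{3}$, giving
\[
 |\Theta(u(x))-\Theta(u(y))|\le (1+\eps)\,|u(x)-u(y)|.
\]
Raising to the $p$-th power, dividing by $|x-y|^{3+sp}$ and integrating over $\S^{3}\times\S^{3}$ then yields exactly $E_{s,p}(\Theta\circ u,\S^{3})\le (1+\eps)^{p}E_{s,p}(u,\S^{3})$.

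For the homotopy claim, I take a continuous homotopy $(\Theta_{t})_{t\in[0,1]}$ from $\Theta_{0}=\mathrm{id}_{\S^{2}}$ to $\Theta_{1}=\Theta$ provided by \Cref{le:holeing}; then $(t,x)\mapsto \Theta_{t}(u(x))$ is a homotopy from $u$ to $\Theta\circ u$, so $\Theta\circ u\sim u$. For the local constancy, I let $V\subset\S^{2}$ be the open neighborhood of $b$ on which $\Theta\equiv b$; then $(\Theta\circ u)(x)=b$ for every $x\in u^{-1}(V)$, and $u^{-1}(V)$ is a neighborhood of the fiber $u^{-1}(b)$ (open in the continuous setting relevant for the construction).

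The only genuinely delicate step is the passage from the geodesic Lipschitz estimate in \Cref{le:holeing} to the chord-distance Lipschitz estimate needed by the Gagliardo seminorm, since a priori the two metrics differ by a factor of $\pi/2$. I expect this to be handled by recalling that the $\Theta$ furnished by the proof in \cite{VanSchaftingen2020} is constructed so as to displace points only slightly, which forces its Lipschitz constant in the chord metric also to be close to $1$; once this observation is in place, the remaining items are pure bookkeeping.
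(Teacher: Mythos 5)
Your proposal follows exactly the route the paper intends (the paper gives no written proof of the corollary; it is stated as an immediate consequence of \Cref{le:holeing}): pointwise Lipschitz bound, raise to the $p$-th power and integrate for the energy, compose $u$ with a homotopy $\Theta_t$ from $\mathrm{id}_{\S^2}$ to $\Theta$ for $\Theta\circ u\sim u$, and use $\Theta\equiv b$ on a neighborhood $V$ of $b$ so that $\Theta\circ u\equiv b$ on $u^{-1}(V)\supset u^{-1}(b)$. The one step where your justification is not solid is the passage from the geodesic to the chordal Lipschitz estimate. Your appeal to the internal construction of \cite{VanSchaftingen2020} (``$\Theta$ is a small $C^1$-perturbation of the identity'') is both unnecessary and inaccurate: $\Theta$ collapses a whole neighborhood of $b$ to the point $b$, so it is not a small $C^1$ perturbation of the identity, and $C^0$-closeness to the identity would in any case not control a Lipschitz constant. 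The clean fix is metric, not constructive: on the sphere $|a-b|=2\sin\bigl(\tfrac12 d_{\S^2}(a,b)\bigr)$, and $\phi(t)=2\sin(t/2)$ is nondecreasing and concave on $[0,\pi]$ with $\phi(0)=0$, hence $\phi(\lambda t)\le\lambda\phi(t)$ for $\lambda\ge 1$. Therefore
\begin{equation*}
|\Theta(u(x))-\Theta(u(y))| = \phi\bigl(d_{\S^2}(\Theta(u(x)),\Theta(u(y)))\bigr)
\le \phi\bigl((1+\eps)\,d_{\S^2}(u(x),u(y))\bigr)
\le (1+\eps)\,|u(x)-u(y)|,
\end{equation*}
with the same constant $1+\eps$ (the boundary case $(1+\eps)d_{\S^2}(u(x),u(y))>\pi$ is handled by the same concavity inequality, since $|\Theta(u(x))-\Theta(u(y))|\le 2$). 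With that substitution your argument is complete; note also that even a fixed multiplicative loss (e.g.\ $\pi/2$) would not harm the application in Step 2 of \Cref{3-3-thm:asymptote}, but it would not prove the corollary as stated, so the concavity argument is the right way to close it.
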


\section{Proof of \Cref{th:main}}
As explained in the introduction, the key to proving \Cref{th:main} lies in showing that the exponent in estimate (1.9) is optimal in the case $m=1$. This is the content of the following theorem:
\begin{theorem}\label{3-3-thm:asymptote}
Let $sp = 3$. There exists a constant $C=C(s)>0$ such that
\begin{equation}
\#_{s} d \leq C |d|^{\frac{3}{4}}
\end{equation}
where $\#_{s} d$, defined in \eqref{eq:infimumoffractionalenergyhopf}, denotes the infimum of $E_{s,\frac3s}$ among the maps
$u\colon\S^3 \to \S^2$ of Hopf degree $d$.
\end{theorem}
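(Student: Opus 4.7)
The plan is to prove the upper bound by exhibiting, for each positive integer $k$, an explicit test map $u_k \colon \S^3 \to \S^2$ of Hopf degree $k^2$ with $E_{s,3/s}(u_k) \lesssim k^{3/2}$. First I build a map $\phi_k \colon \S^2 \to \S^2$ of degree $k$ which is $C k^{1/2}$-Lipschitz, with $C$ independent of $k$: partition $\S^2$ into $k$ geodesic triangles of diameter $\sim k^{-1/2}$ and map each surjectively onto $\S^2$ by a degree $\pm 1$ piecewise-affine map, with signs adjusted so that the total degree is $k$. By the area formula the Lipschitz constant $k^{1/2}$ is optimal up to constants. Set $u_k = \phi_k \circ h$, where $h$ is the standard Hopf map. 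The multiplicativity rule $\deg_H(\phi \circ f) = (\deg \phi)^2 \deg_H(f)$ together with $\deg_H(h) = 1$ gives $\deg_H(u_k) = k^2$, and $u_k$ is $C' k^{1/2}$-Lipschitz on $\S^3$.

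Next, to bound $E_{s,3/s}(u_k)$, I split the Gagliardo integral at the transition scale $|x-y| \sim k^{-1/2}$: on $\{|x-y| \leq k^{-1/2}\}$ I use the Lipschitz bound $|u_k(x)-u_k(y)| \leq C' k^{1/2}|x-y|$, and on its complement the diameter bound $|u_k(x)-u_k(y)| \leq 2$. Polar integration in dimension $3$ together with the criticality $sp = 3$ shows that both contributions are of order $k^{3/2}$; balancing the two regimes is precisely what forces the choice $k^{-1/2}$. Hence $E_{s,3/s}(u_k) \lesssim k^{3/2} \sim (k^2)^{3/4}$.

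For general $d \in \Z$, I write $|d| = k^2 + r$ with $0 \leq r \leq 2k$. Applying \Cref{cor:energy-holeing} with a small $\eps > 0$ produces a map $\tilde u_k \sim u_k$ (composed beforehand with an orientation-reversing self-map of $\S^2$ if $d < 0$) which is constant on a geodesic ball $B \subset \S^3$ and satisfies $E_{s,3/s}(\tilde u_k) \leq (1+\eps)^{3/s} E_{s,3/s}(u_k)$. Inside $B$ I fit $r$ pairwise disjoint small balls and paste a Hopf degree $\pm 1$ bubble into each, taken constant outside a ball of arbitrarily small radius; the scale invariance of the critical fractional energy ensures each bubble contributes at most a universal constant $E_0$. \Cref{le:patching} then yields a map $u$ with $\deg_H(u) = d$ and $E_{s,3/s}(u) \lesssim k^{3/2} + r \lesssim |d|^{3/4}$.

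The main obstacle is the fractional energy estimate for $u_k$: a naive global Lipschitz bound $\lip u_k \lesssim k^{1/2}$ would only yield $E_{s,3/s}(u_k) \lesssim k^{3/(2s)}$, which is far too large when $s < 1$. The efficient estimate combines the Lipschitz bound at small scales with the bound $|u_k(x)-u_k(y)| \leq 2$ at large scales, and the transition scale $k^{-1/2}$ is delicately chosen so that the criticality $sp = n = 3$ makes the two contributions match. The remaining steps rely on the standard opening/patching toolkit, i.e., \Cref{cor:energy-holeing} and \Cref{le:patching}.
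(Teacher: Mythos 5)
Your Step~1 is correct but takes a genuinely different route from the paper's. The paper also tests with a composition $v\circ h$, where $v\colon\S^2\to\S^2$ has degree $k$ and $|\nabla v|\lesssim\sqrt k$, but it first integrates the Gagliardo kernel along the Hopf fibers (a co-area/Cavalieri argument) to prove the transfer inequality $E_{s,p}(v\circ h,\S^3)\lesssim E_{s,p}(v,\S^2)$, and then estimates $E_{s,p}(v,\S^2)$ bubble by bubble using \Cref{le:gluing} and \Cref{le:patching}. You instead estimate $E_{s,p}(u_k,\S^3)$ directly from the global Lipschitz bound $\lip u_k\lesssim\sqrt k$, splitting the double integral at $|x-y|\sim k^{-1/2}$: with $sp=3$ the near part gives $k^{p/2}\cdot k^{-p(1-s)/2}=k^{3/2}$ and the far part gives $\delta^{-sp}\sim k^{3/2}$, with constants depending only on $s$, as the statement allows. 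This bypasses the fiber integration entirely and is arguably simpler; the paper's route has the side benefit of the reusable estimate $E_{s,p}(v\circ h,\S^3)\lesssim E_{s,p}(v,\S^2)$. One small caveat: producing, for \emph{every} $k$, a degree-$k$ self-map of $\S^2$ with Lipschitz constant $\lesssim\sqrt k$ via an exact partition into $k$ uniform triangles needs a word of justification; the paper's variant ($k$ disjoint balls of radius $\lambda/\sqrt k$ carrying degree-one bubbles, constant outside, cf.\ \Cref{a1-3-lmm:top-deg1}) gives such a map directly and plugs into your estimate verbatim. Your Step~2 (opening via \Cref{cor:energy-holeing}, inserting $r\le 2k\le 2\sqrt{|d|}$ bounded-energy Hopf bubbles, patching via \Cref{le:patching}) coincides with the paper's.

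There is, however, one genuine error: your reduction of negative degrees. Postcomposing with an orientation-reversing self-map $\Phi$ of $\S^2$ does \emph{not} flip the Hopf invariant, since $\degh(\Phi\circ f)=(\deg\Phi)^2\,\degh(f)$, so $\deg\Phi=-1$ leaves $\degh$ unchanged; your modified $u_k$ would still have Hopf degree $k^2$, and adding positive bubbles could never reach $d<0$. The correct fix, which is what the paper does, is to precompose with an orientation-reversing isometry $\psi$ of $\S^3$ (e.g.\ a reflection): then $\degh(f\circ\psi)=\deg(\psi)\,\degh(f)=-\degh(f)$, while $E_{s,p}$ is unchanged because $\psi$ preserves distances and the measure. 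With that replacement (and Hopf degree $-1$ bubbles when $d<0$) your argument goes through.
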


\begin{proof}
We follow the construction from the proof of \cite[Lemma III.1]{Riviere98}, and show that it also works in the nonlocal setting for fractional harmonic maps. For readability we introduce $p=\frac 3s$.

\textsc{Step 1. Proof in the case $d=k^2$}

 Consider the Hopf fibration $h\colon\S^3 \to\S^2$ and a map $v\colon\S^2\to\S^2$ of topological degree $\deg v = k$. Recall that (see, e.g., \cite[p.436]{Riviere98}):
\begin{equation}
\degh(v\circ h) = (\deg v)^2 \degh h = k^2.
\end{equation}
By the co-area formula, see, e.g., \cite{Hajlasz00}, we have:
\begin{equation}
\begin{split}
E_{s,p}(v\circ h, \S^3 ) &= \int_{\S^3 } \int_{\S^3 } \frac{|v\circ h(x) - v\circ h(y)|^p}{|x-y|^{3+sp}} \dx\dy \\
&= \int_{\S^3 } \int_{\S^3 } \frac{|v\circ h(x) - v\circ h(y)|^p}{|x-y|^{3+sp}} \frac{|h^*\omega_{\S^2}|}{|h^*\omega_{\S^2}|}\frac{|h^*\omega_{\S^2}|}{|h^*\omega_{\S^2}|}\dx\dy\\
&= \frac{1}{16}\int_{\S^2} \int_{\S^2} \left(\int_{h^{-1}(z_2)}\int_{h^{-1}(z_1)}\frac{|v\circ h(x) - v\circ h(y)|^p}{|x-y|^{3+sp}}\dx\dy \right) \dz_1\dz_2,
\end{split}
\end{equation}
in the last equality we used \ref{item:hopfproperties}. Now, since $h$ is constant on its fibers, we have:
\begin{equation}\label{eq:1}
\begin{split}
E_{s,p}(v\circ h, \S^3 ) &=
\frac{1}{16}\int_{\S^2} \int_{\S^2} \left(\int_{h^{-1}(z_2)}\int_{h^{-1}(z_1)}\frac{|v\circ h(x) - v\circ h(y)|^p}{|x-y|^{3+sp}}\dx\dy \right) \dz_1\dz_2 \\
&=\frac{1}{16}\int_{\S^2} \int_{\S^2} |v(z_1) - v(z_2)|^p \left(\int_{h^{-1}(z_2)}\int_{h^{-1}(z_1)}\frac{1}{|x-y|^{3+sp}}\dx\dy \right) \dz_1\dz_2.
\end{split}
\end{equation}
In order to reduce the exponent by one we integrate the kernel $|x-y|^{-3-sp}$ along the 1-dimensional fibers. Rewrite the integral using the Cavalieri principle:
\begin{equation}\label{eq:2}
\begin{split}
\int_{h^{-1}(z_2)}\int_{h^{-1}(z_1)}\frac{1}{|x-y|^{3+sp}}\dx\dy =
\int_{h^{-1}(z_2)}\left(\int_{S}\frac{1}{|t|^{3+sp}}\dif\mathcal{H}^1(t) \right) \dy  \\
= \int_{h^{-1}(z_2)}\left(\int_{0}^{\infty}(3+sp) \tau^{2+sp} \mathcal{H}^1\left(S \cap \left\{t:\ \frac{1}{|t|}> \tau\right\}\right) \dif\tau \right) \dy ,
\end{split}
\end{equation}
where $S = -y + h^{-1}(z_1)$ is a circle of radius 1 centered around the point $-y$. Now, notice that:
\begin{equation}\label{3-1-eq:2pitau}
\mathcal{H}^{1}\left(S \cap \left\{t:\ \frac{1}{|t|}> \tau\right\}\right) =
\mathcal{H}^{1}\left(S \cap \left\{t:\ |t|< \frac{1}{\tau}\right\}\right) \leq \frac{2\pi}{\tau},
\end{equation}
which follows from the fact, that a ball cannot contain a circle arc longer than $2\pi$ times its radius. Additionally, observe, that since $|z_1 - z_2|\leq \text{Lip}(h) |x - y| = \text{Lip}(h)|t|$:
\begin{equation}\label{3-1-eq:bound}
\mathcal{H}^{1}\left(S \cap \left\{t:\ |t|< \frac{1}{\tau}\right\}\right) = 0 \text{ for } \tau \geq \frac{\text{Lip}(h)}{|z_1 - z_2|}.
\end{equation}
Putting (\ref{3-1-eq:2pitau}) and (\ref{3-1-eq:bound}) together, we obtain:
\begin{equation}\label{eq:3}
\begin{split}
\int_{0}^{\infty}(3+sp) \tau^{2+sp} \mathcal{H}^1\left(S \cap \left\{t:\ \frac{1}{|t|}> \tau\right\}\right) \dif\tau &\leq \int_0^{\frac{\text{Lip}(h)}{|z_1-z_2|}}(3+sp)\tau^{2+sp}\frac{2\pi}{\tau} \dif\tau \\
&\lesssim \frac{1}{|z_1 - z_2|^{2+sp}}.
\end{split}
\end{equation}
Consequently combining \eqref{eq:1} with \eqref{eq:2} and \eqref{eq:3} we obtain
\begin{equation}
E_{s,p}(v\circ h,\S^3)\aleq \int_{\S^2}\int_{\S^2} \frac{|v(z_1)-v(z_2)|^p}{|z_1 - z_2|^{2+sp}}\dif z_1 \dif z_2 = E_{s,p}(v,\S^2).
\end{equation}
Thus we conclude, that:
\begin{equation}\label{3-1-eq:poczatek}
\#_{s}k^2 \leq E_{s,p}(v\circ h, \S^3 ) \lesssim E_{s,p}(v,\S^2)
\end{equation}
for any $v\colon\S^2\to\S^2$ of degree $k$.

In particular, we may take \( v \) from the construction in~\cite[p.~437]{Riviere98}, which we briefly recall here. There exists a constant \( \lambda > 0 \) such that, for any \( k \in \mathbb{N} \), one can find \( k \) disjoint geodesic balls \( \left(B(x_i, \frac{\lambda}{\sqrt{k}})\right)_{i=1}^k \) of radius \( \frac{\lambda}{\sqrt{k}} \) in \( \mathbb{S}^2 \). For each \( i \), let \( v_i \colon \mathbb{S}^2 \to \mathbb{S}^2 \) be a degree-one map that is constant (equal to some fixed \( b \in \mathbb{S}^2 \)) outside of \( B(x_i, \frac{\lambda}{\sqrt{k}}) \), and satisfies \( |\nabla v_i| \leq C\sqrt{k} \). For a detailed construction, we refer the reader to \Cref{a1-3-lmm:top-deg1}. We then define \( v \colon \mathbb{S}^2 \to \mathbb{S}^2 \) by
\begin{equation}\label{eq:map-construction}
v = \begin{cases}
v_i & \text{on } B(x_i, \frac{\lambda}{\sqrt{k}}), \\
b & \text{otherwise}.
\end{cases}
\end{equation}

By construction,
\begin{equation}\label{eq:map-properties}
\deg v = k \quad \text{and} \quad |\nabla v|\leq C\sqrt{k}
\end{equation}
since it is a local property.

Employing \Cref{le:patching}, we write:
\begin{equation}\label{eq:firstvestimate}
E_{s,p}(v, \S^2) \leq 2^p\sum_{i=1}^kE_{s,p}(v_i, \S^2).
\end{equation}
Additionally, by using Lemma \ref{le:gluing} for $\eta = 1/2$ and $\rho = \frac{2\lambda}{\sqrt{k}}$, we obtain for some $C=C(s)$ (in particular the constant is independent of $k$):
\begin{equation}
\begin{split}
E_{s,p}\left(v_i, \S^2\right) &\leq
C E_{s,p}\left(v_i, B(x_i,\frac{2\lambda}{\sqrt{k}})\right) +
C E_{s,p}\left(v_i,\S^2\setminus B(x_i,\frac{\lambda}{\sqrt{k}})\right) \\
&\lesssim E_{s,p}\left(v_i,  B(x_i,\frac{2\lambda}{\sqrt{k}})\right),
\end{split}
\end{equation}
where the last inequality is a consequence of $v_i$ being constant on the set $\S^2\setminus B(x_i,\frac{\lambda}{\sqrt{k}})$. It remains to estimate the localized energy:
\begin{equation}\label{3-1-eq:koniec}
\begin{split}
E_{s,p}\left (v_i,  B(x_i,\frac{2\lambda}{\sqrt{k}})\right) &=
\int_{ B(x_i,\frac{2\lambda}{\sqrt{k}})}
\int_{ B(x_i,\frac{2\lambda}{\sqrt{k}})}
\frac{|v_i(x) - v_i(y)|^p}{|x-y|^{2+sp}}\dx\dy\\
&=
\int_{ B(x_i,\frac{2\lambda}{\sqrt{k}})}
\int_{ B(x_i,\frac{2\lambda}{\sqrt{k}})}
\frac{|v_i(x) - v_i(y)|^p}{|x-y|^{p}} \frac{1}{|x-y|^{2+(s-1)p}} \dx\dy \\
&\lesssim (\sqrt{k})^{p}
\int_{ B(x_i,\frac{2\lambda}{\sqrt{k}})}
\int_{ B(x_i,\frac{2\lambda}{\sqrt{k}})}
\frac{1}{|x-y|^{2+(s-1)p}}\dx\dy \\
&\leq k^{p/2}
\int_{ B(x_i,\frac{2\lambda}{\sqrt{k}})}
\int_{ B(0,\frac{4\lambda}{\sqrt{k}})}
\frac{1}{|z|^{2+(s-1)p}} \dz \dy\\
&\lesssim k^{p/2} \left( \frac{1}{\sqrt{k}} \right)^2 \left( \frac{1}{\sqrt{k}} \right)^{-(s-1)p} = k^{\frac{sp}{2} - 1}.
\end{split}
\end{equation}
Putting \eqref{3-1-eq:poczatek} with \eqref{eq:firstvestimate}-\eqref{3-1-eq:koniec} together with the fact, that $sp=3$, we obtain:
\begin{equation}
\#_{s}k^2\lesssim E_{s,p}(v, \S^2) \lesssim \sum_{i=1}^k E_{s,p}(v_i, \S^2) \lesssim k^{sp/2} = \left(k^2\right)^{3/4},
\end{equation}
which concludes the first part of the proof.

\textsc{Step 2: Proof for any $d\in\Z$}

To prove the claim for arbitrary Hopf degree, fix any \( d \in \mathbb{Z} \setminus \{0\} \). Without loss of generality, assume \( d > 0 \), as the case \( d < 0 \) follows by precomposing with an orientation-reversing diffeomorphism of \( \mathbb{S}^3 \), which changes the sign of the Hopf invariant. Choose \( k \in \mathbb{N} \) such that
\[
k^2 < d \leq (k+1)^2.
\]
Then
\[
0 < d - k^2 < 2k < 2\sqrt{d},
\]
which will be useful in the estimates below.

Given any map $w\colon \S^3 \to \S^2$ with $\degh(w)=k^2$, pick a point $b\in w(\S^3)\subset \S^2$, take an $\eps>0$, and use Lemma \ref{le:holeing} and Corollary \ref{cor:energy-holeing}, we obtain another map $\tilde{w} \sim w$, which is locally constant (equal to $b$) in the neighborhood of the fiber $w^{-1}(b)$ and satisfies relevant energy bounds, i.e.,
\begin{equation}\label{3-3-eq:control}
E_{s,p}(\tilde{w}, \S^3 ) \leq (1+\varepsilon)^pE_{s,p}(w,\S^3 ).
\end{equation}
As such, $\tilde{w}^{-1}(b)$ contains a non-empty open set and we can find $d-k^2$ disjoint open balls $\left( B(x_i,r)\right)_{i=1}^{d-k^2}$ of positive radius inside it.

We can use Hopf degree one maps $g_{x_i,r}$ with respective supports in $B(x_i,r)$ (i.e. constant equal to $b$ outside a given ball), $E_{s,p}(g_{x_i,r},\S^3 ) = E$ for $i=1,\ldots,d-k^2$ (see: Corollary \ref{a1-4-cor:hopf-deg1}), to define $u\colon \S^3\to\S^2$  by:
\begin{equation}
u(x) = \begin{cases}
g_{x_i,r}(x) &\text{if } x\in  B(x_i,r)\\
\tilde{w}(x) &\text{otherwise}.
\end{cases}
\end{equation}
Note, that $\degh(u)=d$ by construction. Additionally, observe that $u$ satisfies the assumptions of \Cref{le:patching}. We have a finite collection of nonempty, nonintersecting open sets $(A_i)_{i=0}^{d-k^2}$, namely:
\begin{equation}
A_0 = \S^3 \setminus \tilde{w}^{-1}(b), \quad A_i =  B(x_i,r),
\end{equation}
and a collection of measurable maps, all constant equal to a common point $b$ outside their specified domain: $\tilde{w}$ for $A_0$ and $g_{x_i,r}$ for each of $A_i$.

Therefore, by Lemma \ref{le:patching}, we have:
\begin{equation}
\#_{s}d\leq E_{s,p}(u,\S^3 )\lesssim
E_{s,p}(\tilde{w}, \S^3 ) + (d-k^2)E
\lesssim (1+\eps)^p E_{s,p}(w, \S^3 ) + (d-k^2).
\end{equation}
Where in the last inequality we used \eqref{3-3-eq:control}. Passing to the infimum on the right-hand side and using the arbitrariness of $\eps>0$, we obtain:
\begin{equation}
\#_{s}d \lesssim \#_{s}k^2 + (d-k^2) \lesssim d^{3/4} + d^{1/2} \lesssim d^{3/4},
\end{equation}
which concludes the proof.
\end{proof}

We now combine the optimal upper bound from \Cref{3-3-thm:asymptote} with the energy decomposition of \Cref{th:energy-id-frac} to conclude the proof of our main result.
\begin{proof}[Proof of Theorem \ref{th:main}]
Suppose only finitely many homotopy classes of $\pi_3(\S^2 )$ contain a minimizing $W^{s,\frac{3}{s}}$-harmonic map. Let $\{d_i\}_{i=0}^K$ be the integers corresponding via the Hopf degree to these homotopy classes, with infima of energies again denoted as $\#_{s}{d_i}$. Now, we may suppose (up to reordering), that $\#_{s}{d_{0}}$ is the smallest of those energies and $|d_{1}|$ is the largest of the integers. By \Cref{th:energy-id-frac} we get for any $d$ a sequence $(a_i)\subset \{1,\ldots,K\}$ such that $d = \sum d_{a_i}$ and:
\begin{equation}
\#_{s}{d} = \sum_{i=1}^{N} \#_{s}{d_{a_i}} =
\sum_{i=1}^{N} \frac{|d_{a_i}|}{|d_{a_i}|}\#_{s}{d_{a_i}} \geq
\#_{s}{d_{0}}\sum_{i=1}^{N} \frac{|d_{a_i}|}{|d_{a_i}|} \geq
\frac{\#_{s}{d_{0}}}{|d_{1}|}\sum_{i=1}^{N} |d_{a_i}| \geq \frac{\#_{s}d_0}{|d_1|}|d|
\end{equation}
which, as $|d|$ tends to infinity, clearly contradicts Theorem \ref{3-3-thm:asymptote}.
\end{proof}

\appendix
\section{}
\begin{lemma}\label{a1-3-lmm:top-deg1}
Let \( x_0, b \in \mathbb{S}^m \) and \( 0 < r < 1 \). There exists a differentiable map \( f_{x_0,r} \colon \mathbb{S}^m \to \mathbb{S}^m \) such that
\[
\deg(f_{x_0,r}) = 1, \qquad |\nabla f_{x_0,r}| \leq \frac{C}{r}, \qquad f_{x_0,r} \equiv b \text{ outside } B(x_0, r),
\]
for some constant \( C > 0 \) independent of \( r \).
\end{lemma}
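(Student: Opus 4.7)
The plan is to first construct a \emph{template} map $F\colon \R^m \to \S^m$ with the desired properties on the scale $r=1$, and then transport it to $\S^m$ via an exponential chart at $x_0$ rescaled by $r$. More precisely, I would show it suffices to produce a smooth $F$ such that (a) $F \equiv b$ on $\R^m \setminus B(0,1)$, (b) $|\nabla F| \le C_0$ for a universal constant $C_0 = C_0(m)$, and (c) $F$ has degree $1$ when extended to $\S^m = \R^m \cup \{\infty\}$ by $F(\infty) = b$. Given such $F$, the candidate is
\[
f_{x_0,r}(x) = \begin{cases} F\bigl(r^{-1}\exp_{x_0}^{-1}(x)\bigr) & x \in B(x_0, r),\\ b & x \notin B(x_0, r). \end{cases}
\]
Since $r < 1 < \pi/2$, the chart $\exp_{x_0}^{-1}$ is bi-Lipschitz with universal constants on $B(x_0, r) \subset B(x_0, \pi/2)$; combined with (b), the chain rule yields $|\nabla f_{x_0,r}| \le C/r$. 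The two branches glue smoothly because as $x \to \partial B(x_0,r)$ the argument $r^{-1}\exp_{x_0}^{-1}(x)$ approaches the unit sphere in $\R^m$, where $F \equiv b$. The degree equals $1$ by a direct regular-value count, or equivalently by factoring $f_{x_0,r}$ through a degree-one collapse map.

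To construct $F$, I would work in geodesic polar coordinates on the target centered at the antipode $-b$. Fix a linear isometry $\iota\colon \R^m \to T_{-b}\S^m$ and set
\[
F(y) = \cos\bigl(\psi(|y|)\bigr)\,(-b) + \sin\bigl(\psi(|y|)\bigr)\, \iota(y/|y|) \qquad \text{for } 0 < |y| < 1,
\]
with $F(0) = -b$ and $F(y) = b$ for $|y| \ge 1$, where $\psi\colon [0,\infty) \to [0,\pi]$ is a smooth profile satisfying $\psi(t) = \pi t$ on $[0, 1/4]$, $\psi \equiv \pi$ on $[3/4, \infty)$, and $\psi$ strictly increasing on $[0, 3/4]$.

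With this $\psi$, verifying (a)--(c) is routine: (a) is immediate because $\psi = \pi$ forces $\cos(\pi)(-b) + \sin(\pi)\,\iota(y/|y|) = b$ on $3/4 \le |y| \le 1$, matching the extension outside; (b) follows from the uniform boundedness of $\psi$, $\psi'$, of $\sin(\psi(|y|))/|y|$, and of their first derivatives; and for (c) I would pick a regular value $q = \cos\theta\,(-b) + \sin\theta\, v$ with $\theta\in(0,\pi)$ and $v \in T_{-b}\S^m$ of unit length, whose unique preimage is $y = \psi^{-1}(\theta)\,\iota^{-1}(v)$, yielding local degree $\pm 1$ according to the orientation of $\iota$ --- fixable to $+1$ by composing $\iota$ with a reflection if necessary.

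The only genuinely delicate point is the smoothness of $F$ at $y = 0$, where the polar decomposition $y = |y|(y/|y|)$ degenerates. This is precisely why $\psi(t) = \pi t$ is imposed near the origin: under this prescription,
\[
F(y) = \cos(\pi|y|)\,(-b) + \frac{\sin(\pi|y|)}{|y|}\, \iota(y) \qquad \text{for } |y| \le 1/4,
\]
and both $\cos(\pi|y|)$ and $\sin(\pi|y|)/|y|$ are smooth even functions of $|y|$, hence smooth functions of $|y|^2$, hence smooth in $y$. Smoothness at $|y| = 1$ is automatic because $\psi \equiv \pi$ on $[3/4, 1]$ already forces $F \equiv b$ to infinite order there, matching the extension on $|y| \ge 1$.
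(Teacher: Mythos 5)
Your construction is correct, and it reaches the same goal as the paper by a genuinely different route. The paper works with the stereographic projection: it first blows the ball $B(x_S,r)$ up onto the southern hemisphere via $\Pi^{-1}\circ\mu_r\circ\Pi$ with $\mu_r(y)=y/r$ (this is where the $C/r$ gradient bound comes from), and then post-composes with the explicit quadratic map $g(x)=(-2x_1x_{m+1},\ldots,-2x_mx_{m+1},1-2x_{m+1}^2)$, which wraps the hemisphere once around $\S^m$ and sends the equator to the base point; general $x_0$ and $b$ are then obtained by rotating domain and target. You instead build a fixed unit-scale template $F\colon\R^m\to\S^m$ by a polar ``suspension'' with profile $\psi$, and transplant it into $B(x_0,r)$ through the rescaled exponential chart, so the $C/r$ bound falls out of the chain rule (using that $\exp_{x_0}^{-1}$ is uniformly bi-Lipschitz on balls of radius $<\pi/2$) and no rotation step is needed. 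Both are valid implementations of the same underlying idea (concentrate a degree-one bubble in a ball of radius $r$ at Lipschitz cost $C/r$); the paper's version is more explicit and sidesteps the smoothness-at-the-origin issue inherent in polar formulas, which you correctly resolve by imposing $\psi(t)=\pi t$ near $0$. One small point to tighten: strict monotonicity of $\psi$ alone does not make every point $q=\cos\theta\,(-b)+\sin\theta\,v$ with $\theta\in(0,\pi)$ a regular value, since $\psi'$ could vanish at the preimage radius; simply take $\theta\in(0,\pi/4)$, so the unique preimage lies where $\psi(t)=\pi t$ and $\psi'=\pi\neq 0$, and the local-degree count (with the sign adjusted by a reflection, as you note) goes through.
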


\begin{proof}
Let \( x_N = (0,\ldots,1) \) and \( x_S = (0,\ldots,-1) \) be the north and south poles of \( \mathbb{S}^m \), and let \( \Pi \colon \mathbb{S}^m \setminus \{x_N\} \to \mathbb{R}^m \) denote the stereographic projection, with inverse
\[
\Pi^{-1}(y) = \left( \frac{2y}{1 + |y|^2}, \frac{-1 + |y|^2}{1 + |y|^2} \right).
\]
By rotating the domain and codomain, we may assume \( x_0 = x_S \) and \( b = x_N \).

\textit{Step 1.} Define the dilation \( \mu_r(y) = y/r \), and consider the map
\[
\tilde{f} = \Pi^{-1} \circ \mu_r \circ \Pi.
\]
Then \( \tilde{f} \) maps \( B(x_S, r) \) onto the southern hemisphere and satisfies \( |\nabla \tilde{f}| \leq C/r \) on \( B(x_S, r) \), since \( \Pi \) and \( \Pi^{-1} \) are smooth diffeomorphisms on this region.

\textit{Step 2.} Let \( g \colon \mathbb{S}^m \to \mathbb{S}^m \) be the smooth map
\[
g(x) = (-2x_1x_{m+1}, \ldots, -2x_mx_{m+1}, 1 - 2x_{m+1}^2),
\]
which maps the southern hemisphere onto \( \mathbb{S}^m \) and satisfies \( g(x) = x_N \) on the equator. One verifies \( |\nabla g| \leq C \).

Define
\[
f_{x_S, r}(x) =
\begin{cases}
g(\tilde{f}(x)) & \text{if } x \in B(x_S, r), \\
x_N & \text{otherwise}.
\end{cases}
\]
Then \( f_{x_S, r} \) has degree 1 and satisfies the desired bounds. Composing with rotations yields the general case \( f_{x_0,r} \), completing the proof.
\end{proof}

\begin{corollary}\label{a1-4-cor:hopf-deg1}
Let \( x_0 \in \mathbb{S}^3 \), \( b \in \mathbb{S}^2 \), and suppose \( sp = 3 \), \( 0 < r < 1 \). Then there exists a differentiable map \( g_{x_0, r} \colon \mathbb{S}^3 \to \mathbb{S}^2 \) such that
\[
\degh(g_{x_0,r}) = 1, \qquad E_{s,p}(g_{x_0,r}, \mathbb{S}^3) \leq E, \qquad g_{x_0,r} \equiv b \text{ outside } B(x_0, r),
\]
for some constant \( E > 0 \) independent of \( r \).
\end{corollary}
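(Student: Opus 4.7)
The plan is to reduce everything to a scale-invariance argument in Euclidean space. The key analytic point is that when $sp=n$, the fractional Gagliardo seminorm on $\R^n$ is invariant under dilations: substituting $u=\lambda x$, $v=\lambda y$ in the double integral gives
\begin{equation*}
[\phi(\lambda\,\cdot)]_{W^{s,p}(\R^n)}^p = \lambda^{sp-n}\,[\phi]_{W^{s,p}(\R^n)}^p,
\end{equation*}
which equals $[\phi]_{W^{s,p}(\R^n)}^p$ when $sp=n=3$. Once a Hopf-degree-one prototype map with compact support in $\R^3$ is available, we may therefore shrink its support into arbitrarily small balls at zero energy cost.

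To build the prototype, I would start from the Hopf map $h\colon\S^3\to\S^2$ (of Hopf degree $1$) and invoke \Cref{cor:energy-holeing} to obtain a homotopic map $\tilde h$ that is locally constant equal to $b$ on some open neighborhood $V$ of the fiber $h^{-1}(b)$, and has finite $W^{s,p}$-energy on $\S^3$. Picking any $p\in V$ and using the stereographic projection $\Pi_p\colon\S^3\setminus\{p\}\to\R^3$, I set $\phi_1:=\tilde h\circ\Pi_p^{-1}$. Since $\tilde h\equiv b$ near $p$ and $\Pi_p^{-1}(y)\to p$ as $|y|\to\infty$, the map $\phi_1$ is smooth, equals $b$ outside a fixed ball $B(0,R_1)$, and has finite Euclidean Gagliardo energy $E_1:=[\phi_1]_{W^{s,p}(\R^3)}^p$.

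Next I would transfer back to $\S^3$. Let $\Pi\colon\S^3\setminus\{x_0^*\}\to\R^3$ be the stereographic projection from the antipode of $x_0$, normalized so that $\Pi(x_0)=0$; as a smooth map with non-degenerate derivative at $x_0$, it is bi-Lipschitz on $B(x_0,1/2)$ with constants independent of $r$. Choosing $\lambda$ of order $1/r$ so that the dilation $y\mapsto\phi_1(\lambda y)$ has support inside $\Pi(B(x_0,r))$, I define
\begin{equation*}
g_{x_0,r}(x):=\begin{cases}\phi_1(\lambda\,\Pi(x)),& x\in\S^3\setminus\{x_0^*\},\\ b,& x=x_0^*.\end{cases}
\end{equation*}
Then $g_{x_0,r}$ is smooth and equals $b$ outside $B(x_0,r)$. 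Its Hopf degree equals $\pm 1$, since a diffeomorphism of $\S^3$ changes the Hopf degree only by a sign; by composing $\Pi$ with an orientation-reversing diffeomorphism if needed, I arrange $\degh(g_{x_0,r})=+1$.

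Finally, for the energy bound I split the double integral defining $E_{s,p}(g_{x_0,r},\S^3)$ according to whether the two variables lie in $B(x_0,r)$. The contribution from $(\S^3\setminus B(x_0,r))^2$ vanishes, since $g_{x_0,r}\equiv b$ there. The diagonal piece over $B(x_0,r)^2$ is controlled, via the uniform bi-Lipschitz estimate for $\Pi$, by the Euclidean energy of $\phi_1(\lambda\,\cdot)$, which equals $E_1$ by scale invariance. The cross piece over $B(x_0,r)\times(\S^3\setminus B(x_0,r))$ is bounded by $r^{-sp}\|g_{x_0,r}-b\|_{L^p(B(x_0,r))}^p$; after the substitution $w=\lambda\Pi(x)$ this becomes $\lesssim (\lambda r)^{-sp}\lambda^{sp-3}\|\phi_1-b\|_{L^p(\R^3)}^p$, which is $O(1)$ because $\lambda r$ is a fixed constant and $sp=3$. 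The main obstacle is precisely this final bookkeeping: making sure that the spherical and Euclidean scales on $B(x_0,r)$ match with constants independent of $r$, so that the critical balance $sp=3$ really does absorb all $r$-dependence in both pieces.
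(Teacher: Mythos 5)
Your argument is correct in substance, but it takes a genuinely different route from the paper. The paper's proof simply sets $g_{x_0,r}=h\circ f_{x_0,r}$, where $f_{x_0,r}\colon\S^3\to\S^3$ is the concentrated degree-one map of \Cref{a1-3-lmm:top-deg1} with $|\nabla f_{x_0,r}|\le C/r$, and then bounds the energy directly: \Cref{le:gluing} localizes to $B(x_0,2r)$, the Lipschitz bound gives $|g_{x_0,r}(x)-g_{x_0,r}(y)|\lesssim r^{-1}|x-y|$, and the criticality $sp=3$ makes the resulting powers of $r$ cancel. You instead exploit the dilation invariance of the Gagliardo seminorm on $\R^3$ at the exponent $sp=3$: you ``open'' the Hopf map via \Cref{cor:energy-holeing}, push it to $\R^3$ by stereographic projection from a point where it is constant, rescale at zero energy cost, and transplant into $B(x_0,r)$ by a second stereographic projection; the Hopf degree is preserved up to sign because precomposition with a diffeomorphism of $\S^3$ multiplies $\degh$ by the degree of that diffeomorphism. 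Both proofs ultimately rest on the same criticality, but the paper's version is shorter and reuses \Cref{a1-3-lmm:top-deg1} (needed anyway in Step~1), whereas yours isolates the conceptual reason the bound is $r$-independent (conformal/scale invariance) without re-estimating the kernel. One small imprecision to repair: your bound of the cross term by $r^{-sp}\|g_{x_0,r}-b\|_{L^p(B(x_0,r))}^p$ is not valid if the support of $g_{x_0,r}-b$ reaches up to $\partial B(x_0,r)$, since then $\int_{\S^3\setminus B(x_0,r)}|x-y|^{-3-sp}\,\dy\approx \dist(x,\partial B(x_0,r))^{-sp}$, which can be much larger than $r^{-sp}$; choose $\lambda$ a fixed factor larger so the rescaled support sits in $B(x_0,r/2)$ (or invoke \Cref{le:gluing} instead of the hand-made splitting), after which the estimate goes through with constants independent of $r$. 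You should also note that $\tilde h$ is only $C^1$ (so $\phi_1$ is $C^1$, which suffices both for the statement and for finiteness of $E_1$, since $p=3/s>3$ makes the near-diagonal integral converge).
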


\begin{proof}
Let \( h \colon \mathbb{S}^3 \to \mathbb{S}^2 \) be the Hopf map, and choose any \( b' \in h^{-1}(b) \). Let \( f_{x_0,r} \colon \mathbb{S}^3 \to \mathbb{S}^3 \) be the map from \Cref{a1-3-lmm:top-deg1}, which equals \( b' \) outside \( B(x_0, r) \), and define
\[
g_{x_0,r} = h \circ f_{x_0,r}.
\]
Then \( g_{x_0,r} \equiv b \) outside \( B(x_0, r) \), and
\[
\degh(g_{x_0,r}) = \degh(h) \cdot \deg(f_{x_0,r}) = 1.
\]

Since \( g_{x_0,r} \) is constant outside \( B(x_0, r) \), Lemma~\ref{le:gluing} with \( \eta = 1/2 \), \( \rho = 2r \) gives
\[
E_{s,p}(g_{x_0,r}, \mathbb{S}^3) \leq C\, E_{s,p}(g_{x_0,r}, B(x_0,2r)).
\]
Using the Lipschitz continuity of \( h \) and the estimate \( |\nabla f_{x_0,r}| \leq C/r \), we compute
\begin{align*}
E_{s,p}(g_{x_0,r}, B(x_0,2r)) 
&\leq C \int_{B(x_0,2r)} \int_{B(x_0,2r)} 
\frac{|f_{x_0,r}(x) - f_{x_0,r}(y)|^p}{|x - y|^{3 + sp}} \,dx\,dy \\
&\leq \frac{C}{r^p} \int_{B(x_0,2r)} \int_{B(0,4r)} \frac{1}{|z|^{3 + (s-1)p}} \,dz\,dy \\
&\leq \frac{C}{r^p} \cdot r^3 \cdot r^{-(s-1)p} = C,
\end{align*}
where we used \( sp = 3 \) in the final step. This concludes the proof.
\end{proof}

\bibliographystyle{abbrv}%
\bibliography{bib}%
\end{document}